\newcommand{\R}[1]{\mathbb{R}^{#1}}
\newcommand{\de}{\mathrm d}
\newcommand{\eps}{\varepsilon}
\renewcommand{\geq}{\geqslant}
\renewcommand{\leq}{\leqslant}
\newcommand{\wsto}{\stackrel{*}{\rightharpoonup}}
\newcommand{\average}{{\mathchoice {\kern1ex\vcenter{\hrule
height.4pt width 8pt depth0pt}
\kern-11pt} {\kern1ex\vcenter{\hrule height.4pt width 4.3pt
depth0pt} \kern-7pt} {} {} }}
\mathchardef\emptyset="001F
\providecommand{\U}[1]{\protect\rule{.1in}{.1in}}
\numberwithin{equation}{section}
\def\e{\eps}
\definecolor{vg}{rgb}{0.0, 0.45, 0.10}
\newtheorem{definition}{Definition}[section]
\newtheorem{theorem}[definition]{Theorem}
\newtheorem{lemma}[definition]{Lemma}
\newtheorem{proposition}[definition]{Proposition}
\newtheorem{remark}[definition]{Remark}
\title[SBH-ellipticity of the relaxed surface energy density for structured deformations]{SBH-ellipticity of the relaxed interfacial energy density in the context of second-order structured deformations}
\author[A.~C.~Barroso]{Ana Cristina Barroso}
\address[A.~C.~Barroso]{Departamento de Matem\'atica and CEMS.UL, 
	Faculdade de Ci\^encias da Universidade de Lisboa,
	Campo Grande, Edif\' \i cio C6, Piso 1,
	1749-016 Lisboa, Portugal}
\email{acbarroso@ciencias.ulisboa.pt}
\author[J.~Matias]{Jos\'{e} Matias}
\address[J.~Matias]{Departamento de Matem\'atica, Instituto Superior T\'ecnico, Av.~Rovisco Pais, 1, 1049-001 Lisboa, Portugal}
\email{jose.c.matias@tecnico.ulisboa.pt}
\author[E.~Zappale]{Elvira Zappale}
\address[E.~Zappale]{Dipartimento di Scienze di Base ed applicate per l'Ingegneria, Sapienza - Universit\`{a} di Roma, Via A. Scarpa, 16, 00161, Roma, Italy, and CIMA, Universidade de \'Evora, Portugal}
\email{elvira.zappale@uniroma1.it}
\begin{document}

\begin{abstract}

Starting from an energy comprised of both a bulk term and a surface term, set in the space of special functions of bounded hessian, $SBH$, a relaxation problem in the context of second-order structured deformations was studied in \cite{FHP}. It was shown, via the global method for relaxation, that the relaxed functional admits an integral representation and the relaxed energy densities were identified.
In this paper we show that, under certain hypotheses on the original densities, the corresponding relaxed energy densities verify the same type of growth conditions and the surface energy density satisfies a specific ``convexity-type'' property, i.e. it  is $SBH$-elliptic.

\bigskip

\textsc{MSC (2020):}
{26B25, 49J45}

\textsc{Keywords:}{ global method for relaxation, second-order structured deformations, relaxed surface energy density, $SBH$-ellipticity}

\color{black}
\end{abstract}

\maketitle

\allowdisplaybreaks

\tableofcontents

\color{black}
\section{Introduction}

\color{black}

Structured deformations were introduced by Del Piero and Owen in \cite{DPO1993}  in order to provide a multi-scale geo\-me\-tri\-cal setting to describe the fields occurring in continuum mechanics, decomposing them into two parts: one arising from smooth changes at smaller length scales and another part due to disarrangements (such as slips and separations) occurring at smaller length scales. The portion generated by the part of deformation without disarrangements is computed as a limit (in an appropriate sense) of gradients of approximating deformations, and the portion generated by disarrangements corresponds to the effects of jumps in the approximating deformations. Adopting the same  point of view as in \cite{DPO1993},  Owen and Paroni in \cite{OP2000} extended the principal concepts and results of the above theory to include the effects of limits of second gradients and jumps in the first gradients of approximating deformations. In particular, this enriched theory allows to take into account bending and torsion effects.

In \cite{FHP}, 
the authors address second-order structured deformations, \color{black} in the context of special functions of bounded Hessian, $SBH$,  with the main aim of describing  their  energetic behaviour (we refer to Section \ref{prelim} for more details and for the rigorous framework of first-order structured deformations and their second-order variants). In particular, in \cite{FHP} the energy assigned to a second-order structured deformation was defined and computed, in terms of relaxation, departing from a hyper-elastic energy that also depends on second-order distributional gradients.
 Indeed, an integral representation result for a quite general class of energy  functionals satisfying standard properties (see Section \ref{mr} for more details) was derived, in the spirit of the global method for relaxation, introduced by Bouchitt\'{e}, Fonseca and Mascarenhas in \cite{BFM}, and later extended in \cite{BFLM}. 
 The limiting energy,  obtained  via the above-mentioned relaxation procedure, is described in terms of suitable densities, given by means of certain  formulas (see Section \ref{mr} for the precise  expressions), but no properties enjoyed by these densities have been discussed.  

In this work, departing from a specific model, as a particular case of some applications considered in \cite{FHP}, and having in mind the characterisation of the relaxed surface energy in the context of (first-order) structured deformations obtained by \v Silhav\'y  in \cite{Silhavy},
we prove the $SBH$-ellipticity of the corresponding relaxed surface energy density.

More precisely, we consider, over an open and bounded set $\Omega \subset \mathbb R^N$, the family of second-order structured deformations
$$ SD_2(\Omega) := SBH( \Omega; \mathbb R^d) \times 
L^1( \Omega; \mathbb R_{sym}^{d\times N\times N})$$
where $\mathbb R_{sym}^{d \times N\times N} \subset \mathbb R ^{d\times N\times N}$  denotes the set of tensors 
$(M_{ijk}), i \in \{1, \ldots d\}, j, k, \in \{1, \ldots N\},$ such that 
$M_{ijk} = M_{ikj}$ for all $i \in \{1, \ldots d\}, j, k \in \{1, \dots N\}.$

Let $\mathcal{O}(\Omega)$ denote  the family of open subsets of $\Omega$ and let 
$F_0$  be the functional defined for each 
$ A \in \mathcal{O}(\Omega)$ by
\begin{equation}\label{Funct}
F_0(u;A) := \begin{cases}
\displaystyle  \int_A f_0(\nabla^2u(x))\, dx + 
\int_A f_1(x,u(x),\nabla u(x)) \, dx & \\
\hspace{1,5cm}+
\displaystyle \int_{S(\nabla u)\cap A} g_0([\nabla u](x),\nu_{\nabla u}(x)) \, d\mathcal H^{N-1}(x),  & \mbox{if } u \in SBH(\Omega; \mathbb R^d) \\
+\infty,     & \text{otherwise},
\end{cases}
\end{equation}

where the densities $f_0$, $f_1$ and $g_0$ satisfy the following hypotheses:

\begin{enumerate}
    \item[(G1)] $f_0 : \mathbb R^{d\times N\times N}_{sym} \to [0, +\infty[$ is continuous and satisfies 
    $$ \frac{1}{C_1} |\Lambda| \leq f_0( \Lambda) \leq C_1( |\Lambda|),$$
    for all $\Lambda \in \mathbb R^{d\times N\times N}_{sym}$ and for some $C_1>0;$
    \item[(G2)] $g_0:\mathbb R^{d\times N} \times \mathbb{S}^{N-1} \color{black}\to [0, +\infty[$ is continuous and  satisfies 
   $$ \frac{1}{C_2} |\xi| \leq g_0( \xi,\nu) 
    \leq C_2|\xi|,$$
    as well as the symmetry condition
    $$g_0(\xi, \nu) = g_0(-\xi,-\nu),$$
    for all $(\xi,\nu) \in \mathbb R^{d\times N}\times \mathbb{S}^{N-1}$ and for some $C_2> 0$;
\item[(G3)] $f_1:\Omega \times \mathbb R^d \times \mathbb R^{d\times N}\to [0,+\infty)$ is a Carath\'eodory function, i.e. it is $\mathcal L^N$-measurable in the first variable and continuous in the remaining ones, such that
$$
\frac{1}{C'}|\xi|^q-C'\leq f_1(x,a, \xi)\leq C'(1+|\xi|^q),
$$
for a.e. $x \in \Omega$ and every $(a,\xi)\in \mathbb R^d \times \mathbb R^{d\times N}$, and for some $C'>0$ and $1< q \leq \frac{N}{N-1}$.
\end{enumerate}
 Although we do not use this property in the proofs, from the mechanical point of view, it is sometimes required that $g_0$ be sub-additive in the ${\xi}$ variable. 
The functional $F_0$ in \eqref{Funct} can be interpreted as the energy associated to classical deformations in multi-phase materials, in fact, it is the sum of a classical hyper-elastic energy and a penalisation term intended to detect the effects of meso-level phase transitions, since it penalises the variation of the deformation gradient, both in its smooth changes and in its jumps.

Consider the functional 
$\mathcal F : SD_2(\Omega) \times \mathcal{O}(\Omega) \to [0, +\infty[$ defined by
\begin{equation}\label{relaxf}
\mathcal{F}(u,U;A) := \inf \left\{ \liminf_{n\to +\infty} F_0(u_n; A) : u_n \to u \; \text{in} \; W^{1,1}(\Omega; \mathbb R^d), \color{black}\nabla^2 u_n \wsto U \; \text{in}\; \mathcal{M} (\Omega; \mathbb R^{d\times N\times N}_{s})\right\}.
\end{equation}

 It is well known that, under the above assumptions, the term $\displaystyle \int_A f_1(x,u(x), \nabla u(x)) \, dx$, present in $F_0$, is con\-ti\-nu\-ous with respect to the $W^{1,1}(\Omega;\mathbb R^d)$ convergence of $u_n$ to $u$ (see \cite{DM93}), hence, when computing the right hand side of \eqref{relaxf}, that term can be neglected and added after the relaxation procedure, i.e. 
\begin{align*}
\mathcal F(u,U;A) = \inf \left\{ \liminf_{n\to +\infty} 
\left(\int_A f_0(\nabla^2 u_n(x)) \, dx +
\int_{S(\nabla u_n) \cap A} g_0([\nabla u_n](x),\nu_{\nabla u_n}(x)) 
\, d\mathcal H^{N-1}(x)\right) :\right.\\
\left. u_n \to u \; \text{in} \; W^{1,1}(\Omega; \mathbb R^d), 
\nabla^2 u_n \wsto U \; \text{in}\; 
\mathcal{M} (\Omega; \mathbb R_{s}^{d\times N\times N})\right\}
+\int_A f_1(x, u(x), \nabla u(x)) \, dx.
\end{align*}

We also point out that, due to the interpolation inequality stated in Theorem \ref{interpolBH}, one can replace the $W^{1,1}(\Omega;\mathbb R^d)$ convergence of $u_n$ to $u$ in \eqref{relaxf}, by the $L^{1}(\Omega;\mathbb R^d)$ convergence of $u_n$ to $u$, obtaining in both cases the same result.

An integral representation for $\mathcal {F}$, for a more general form of the energy $F_0$, was obtained in \cite[Theorem 5.4]{FHP}, through the application of the global method for relaxation results derived in the same article
(see also \cite{BMZ2024} and \cite{BZ2025} for related problems).

In particular, from those results, it can be shown that, under hypotheses (G1)-(G3), there exist functions 
$f: \mathbb R^{d\times N\times N}_{sym}\times \mathbb R^{d\times N\times N}_{sym} 
\to [0, +\infty[$ and 
$g: \mathbb R^{d\times N} \times \mathbb {S}^{N-1} 
\to [0, +\infty[ $ such that
\begin{align*}
\mathcal{F}(u,U; A) &= \int_A f(\nabla^2 u(x), U(x))\, dx + 
\int_A f_1(x, u(x), \nabla u(x)) \,dx \\
&+
\int_{S(\nabla u)\cap A} g([\nabla u](x),\nu_{\nabla u}(x))
\, d \mathcal H^{N-1}(x),
\end{align*}
for all $(u,U) \in SD_2(\Omega)$ and all $A \in \mathcal{O}(\Omega)$.
Furthermore, the relaxed energy densities $f$ and $g$ can be identified in terms of a related local Dirichlet-type functional (see Section \ref{mr} for more details).
Our aim in this paper is to show that the function $g$ is $SBH$-elliptic in the sense of Definition \ref{BHelldef} below.

We organise the paper as follows. In Section \ref{prelim} we set the notation and state some results on $BV$ and $BH$ fields which will be used in the sequel. We also recall the notions of first-order and second-order structured deformations, as well as the concept of $SBH$-ellipticity. In Section \ref{mr} we give a brief overview of some of the results obtained in \cite{FHP} that pertain to our problem and we state and prove our main result on the $SBH$-ellipticity of the relaxed surface energy density.

\color{black}

\section{Preliminaries}\label{prelim}

\subsection{Notation} 

We will use the following notations
\begin{itemize}
	\item $\mathbb N$ denotes the set of natural numbers without the zero element;
	\item $\Omega \subset \mathbb R^{N}$ is a bounded, connected open set with Lipschitz boundary; 
	\item $\mathbb S^{N-1}$ denotes the unit sphere in $\mathbb R^N$;
	\item $Q\coloneqq (-\tfrac12,\tfrac12)^N$ denotes the open unit cube of $\mathbb R^{N}$ centred at the origin; for any $\nu\in\mathbb S^{N-1}$, $Q_\nu$ denotes any open unit cube in $\mathbb R^{N}$ with two faces orthogonal to $\nu$; 
	\item for any $x\in\mathbb R^{N}$ and $\delta>0$, $Q(x,\delta)\coloneqq x+\delta Q$ denotes the open cube in $\mathbb R^{N}$ centred at $x$ with side length $\delta$; likewise $Q_\nu(x,\delta)\coloneqq x+\delta Q_\nu$;
	\item ${\mathcal O}(\Omega)$ is the family of all open subsets of $\Omega $, whereas ${\mathcal O}_\infty(\Omega)$ is the family of all open subsets of $\Omega $ with Lipschitz boundary; 
	\item $\mathcal L^{N}$ and $\mathcal H^{N-1}$ denote the  $N$-dimensional Lebesgue measure and the $\left(  N-1\right)$-dimensional Hausdorff measure in $\mathbb R^N$, respectively; the symbol $\de x$ will also be used to denote integration with respect to $\mathcal L^{N}$; 
	\item $\mathcal M(\Omega;\mathbb R^{d\times N})$ is the set of finite matrix-valued Radon measures on $\Omega$; $\mathcal M ^+(\Omega)$ is the set of non-negative finite Radon measures on $\Omega$;
	given $\mu\in\mathcal M(\Omega;\mathbb R^{d\times N})$,  
	the measure $|\mu|\in\mathcal M^+(\Omega)$ 
	denotes the total variation of $\mu$;
	\item $L^p(\Omega;\mathbb R^{d\times N})$ is the set of matrix-valued $p$-integrable functions; 
	\item  $C$ represents a generic positive constant that may change from line to line.
\end{itemize}

\medskip

\subsection{Fields of Bounded Variation}\label{BV}

In the following we give some preliminary notions regarding functions of bounded variation and sets of finite perimeter. For a detailed treatment we refer to \cite{AFP}.

\smallskip

Given $u \in L^1(\Omega; \mathbb R^d)$ we let $\Omega_u$ be the set of Lebesgue points of $u$,
i.e., $x\in \Omega_u$ if there exists $\widetilde u(x)\in {\mathbb{R}}^d$ such
that 
\begin{equation*}
	\lim_{\varepsilon\to 0^+} \frac{1}{\e^N}\int_{B(x,\varepsilon)}
	|u(y)-\widetilde u(x)|\,dy=0,
\end{equation*}
$\widetilde u(x)$ is called the approximate limit of $u$ at $x$.
The Lebesgue discontinuity set $S_u$ of $u$ is defined as 
$S_u := \Omega \setminus \Omega_u$. It is known that ${\mathcal{L}}^{N}(S_u) = 0$ and the function 
$x \in \Omega \mapsto \widetilde u(x)$, which coincides with $u$ $\mathcal L ^N$- a.e.
in $\Omega_u$, is called the Lebesgue representative of $u$.

The jump set of the function $u$, denoted by $J_u$, is the set of
points $x\in \Omega \setminus \Omega_u$ for which there exist 
$a, \,b\in {\mathbb{R}}^d$ and a unit vector $\nu \in \mathbb S^{N-1}$, normal to $J_u$ at $x$, such that $a\neq b$ and 
\begin{equation*}  
	\lim_{\varepsilon \to 0^+} \frac {1}{\varepsilon^N} \int_{\{ y \in
		B(x,\varepsilon) : (y-x)\cdot\nu > 0 \}} | u(y) - a| \, dy = 0,
	\qquad
	\lim_{\varepsilon \to 0^+} \frac {1}{\varepsilon^N} \int_{\{ y \in
		B(x,\varepsilon) : (y-x)\cdot\nu < 0 \}} | u(y) - b| \, dy = 0.
\end{equation*}
The triple $(a,b,\nu)$ is uniquely determined by the conditions above,  
up to a permutation of $(a,b)$ and a change of sign of $\nu$,
and is denoted by $(u^+ (x),u^- (x),\nu_u (x)).$ The jump of $u$ at $x$ is defined by
$[u](x) : = u^+(x) - u^-(x).$

\smallskip

We recall that a function $u\in L^{1}(\Omega;{\mathbb{R}}^{d})$ is said to be of bounded variation, and we write $u\in BV(\Omega;{\mathbb{R}}^{d})$, if
all its first order distributional derivatives $D_{j}u_{i}$ belong to 
$\mathcal{M}(\Omega)$ for $1\leq i\leq d$ and $1\leq j\leq N$.

The matrix-valued measure whose entries are $D_{j}u_{i}$ is denoted by $Du$
and $|Du|$ stands for its total variation.
The space $BV(\Omega; {\mathbb{R}}^d)$ is a Banach space when endowed with the norm 
\begin{equation*}
	\|u\|_{BV(\Omega; {\mathbb{R}}^d)} = \|u\|_{L^1(\Omega ; {\mathbb{R}}^d)} + |Du|(\Omega )
\end{equation*}
and we observe that if $u\in BV(\Omega;\mathbb{R}^{d})$ then $u\mapsto|Du|(\Omega)$ is lower semi-continuous in $BV(\Omega;\mathbb{R}^{d})$ with respect to the
$L_{\mathrm{loc}}^{1}(\Omega;\mathbb{R}^{d})$ topology.

By the Lebesgue Decomposition Theorem, $Du$ can be split into the sum of two
mutually singular measures $D^{a}u$ and $D^{s}u$, the absolutely continuous
part and the singular part, respectively, of $Du$ with respect to the
Lebesgue measure $\mathcal{L}^N$. By $\nabla u$ we denote the 
Radon-Nikod\'{y}m derivative of $D^{a}u$ with respect to $\mathcal{L}^N$, so that we
can write 
\begin{equation*}
	Du= \nabla u \mathcal{L}^N \lfloor \Omega + D^{s}u.
\end{equation*}

If $u \in BV(\Omega )$ it is well known that $S_u$ is countably $(N-1)$-rectifiable, see \cite{AFP},  
and the following decomposition holds 
\begin{equation*}
	Du= \nabla u \mathcal{L}^N \lfloor \Omega + [u] \otimes \nu_u {\mathcal{H}}^{N-1}\lfloor S_u + D^cu,
\end{equation*}
\noindent where $D^cu$ is the Cantor part of the
measure $Du$.  When $D^cu = 0$, the function $u$ is said to be a special function of bounded variation, written
$u \in SBV(\Omega;\mathbb R^d)$. 

If $\Omega$ is an open and bounded set with Lipschitz boundary then the
outer unit normal to $\partial \Omega$ (denoted by $\nu$) exists ${\mathcal{H}}^{N-1}$-a.e. and the trace for functions in $BV(\Omega;{\mathbb{R}}^d)$ is
defined.

\subsection{Fields of Bounded Hessian}\label{BH}

The space of fields with bounded hessian, introduced in \cite{D, DT} and denoted by $BH(\Omega;\mathbb R^d)$, is defined as
\begin{align*}
BH(\Omega;\mathbb R^d):=&\left\{u \in W^{1,1}(\Omega;\mathbb R^d): D(\nabla u) \hbox{ is a bounded Radon measure }\right\} \\
=& \left\{u \in L^1(\Omega;\mathbb R^d): 
Du \in BV(\Omega;\mathbb R^{d\times N})\right\},
\end{align*}
the measure $D(\nabla u)$ is often written as $D^2u$.
The space $BH(\Omega;\mathbb R^d)$ is endowed with the norm
\begin{align*}
\|u\|_{BH(\Omega;\mathbb R^d)} :=\|u\|_{W^{1,1}(\Omega;\mathbb R^d)}
+ |D^2 u|(\Omega),
\end{align*}
where the latter term denotes the total variation of the Radon measure $D^2 u$.

Given $u \in BH(\Omega;\mathbb R^d)$, 
taking into account that $Du=\nabla u \in BV(\Omega;\mathbb R^d)$, the following decomposition holds, 
\begin{equation*}
D(\nabla u)= \nabla^2 u \mathcal L^N + D^s(\nabla u) =
\nabla^2 u \mathcal L^N + 
[\nabla u] \otimes \nu_{\nabla u} \mathcal H^{N-1}\lfloor S_{\nabla u} 
+ D^c(\nabla u).
\end{equation*}

Due to the symmetry of the distribution $D^2 u = D(\nabla u)$, in the previous expression,  $\nabla^2 u \mathcal L^N$ is a vector-valued measure, in the space of 
$d\times N \times N$  symmetric tensors $\mathbb R^{d \times N\times N}_{sym}$, that is absolutely continuous with respect to $\mathcal L^N$ and with 
$$\displaystyle \nabla^2 u= \frac{\partial ^2 u_i}{\partial x_j \partial x_k}= \frac{\partial^2 u_i}{\partial x_k \partial x_j}, \; 1 \leq i \leq d, \; 1 \leq j,k \leq N,$$ 
$S_{\nabla u}$ denotes the singular set of $\nabla u$, $\nu_{\nabla u}$ is the normal to $S_{\nabla u}$ , $[\nabla u]:= (\nabla u)^+- (\nabla u)^-$
is the jump across $S_{\nabla u}$ and $D^c(\nabla u)$ is the Cantor part of $D(\nabla u)$, which is singular with respect to  $\mathcal L^N \lfloor \Omega+ \mathcal H^{N-1}\lfloor S_{\nabla u}$.
 
Again by the symmetry of $D^2u$, it follows that $[\nabla u] = \alpha  \otimes \nu_{\nabla u}$, for some function $\alpha$ which is integrable in $\Omega$ with respect to the measure $\mathcal H^{N-1}\lfloor S_{\nabla u}$, due to the fact that
$[\nabla u] \otimes \nu_{\nabla u} $ is a $d$-tuple of $N \times N$ symmetric rank-one matrices.

The space $SBH(\Omega;\mathbb R^d)$ consists of those functions $u \in BH(\Omega;\mathbb R^d)$ for which $D^c(\nabla u) = 0$,
that is,
$$SBH(\Omega;\mathbb R^d):= \left\{u \in L^1(\Omega;\mathbb R^d): 
Du \in SBV(\Omega;\mathbb R^{d\times N})\right\}.$$

The proof of the following interpolation inequality can be found in \cite[Theorem 2.2]{FLP2}.

\begin{theorem}\label{interpolBH}[Interpolation inequality]
	Let $\Omega \subset \mathbb R^N$ be an open, bounded set with Lipschitz boundary. 
	Then, for every $\varepsilon > 0$, there is a constant $C = C(\varepsilon)$ such that
	$$\|\nabla u\|_{L^1(\Omega;\mathbb R^{d \times N})} \leq C\|u\|_{L^1(\Omega;\mathbb R^d)} + \varepsilon|D^2u|(\Omega),$$
	for all $u \in BH(\Omega;\mathbb R^d)$.
\end{theorem}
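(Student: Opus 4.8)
The plan is to argue by contradiction, reducing the statement to the compactness of bounded sequences in $BH(\Omega;\mathbb{R}^d)$ with respect to the $W^{1,1}$ topology. Suppose the inequality fails. Then there exist $\varepsilon_0>0$ and a sequence $(u_n)\subset BH(\Omega;\mathbb{R}^d)$ with
\[
\|\nabla u_n\|_{L^1(\Omega;\mathbb{R}^{d\times N})} > n\,\|u_n\|_{L^1(\Omega;\mathbb{R}^d)} + \varepsilon_0\,|D^2 u_n|(\Omega), \qquad n\in\mathbb{N}.
\]
Since this inequality is homogeneous under multiplication of $u_n$ by a nonzero constant, we may normalise so that $\|\nabla u_n\|_{L^1(\Omega;\mathbb{R}^{d\times N})}=1$ for every $n$. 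The displayed inequality then forces $\|u_n\|_{L^1(\Omega;\mathbb{R}^d)}<1/n$ and $|D^2 u_n|(\Omega)<1/\varepsilon_0$; in particular $(u_n)$ is bounded in $BH(\Omega;\mathbb{R}^d)$ and $u_n\to 0$ in $L^1(\Omega;\mathbb{R}^d)$.

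The boundedness of $(u_n)$ in $BH(\Omega;\mathbb{R}^d)$ means that $(u_n)$ is bounded in $W^{1,1}(\Omega;\mathbb{R}^d)$, hence in $BV(\Omega;\mathbb{R}^d)$, and that $(\nabla u_n)$ is bounded in $BV(\Omega;\mathbb{R}^{d\times N})$, because $D(\nabla u_n)=D^2 u_n$. Since $\Omega$ is bounded with Lipschitz boundary, the compactness theorem for $BV$ (see \cite{AFP}) provides a subsequence, not relabelled, and functions $u\in L^1(\Omega;\mathbb{R}^d)$ and $w\in L^1(\Omega;\mathbb{R}^{d\times N})$ with $u_n\to u$ in $L^1(\Omega;\mathbb{R}^d)$ and $\nabla u_n\to w$ in $L^1(\Omega;\mathbb{R}^{d\times N})$. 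Testing both convergences against $\varphi\in C_c^\infty(\Omega)$ and integrating by parts, which is legitimate since each $u_n\in W^{1,1}$, identifies $w=\nabla u$ in the sense of distributions; as $w\in L^1$, this gives $u\in W^{1,1}(\Omega;\mathbb{R}^d)$ and $u_n\to u$ in $W^{1,1}(\Omega;\mathbb{R}^d)$.

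It remains to reach a contradiction: from $u_n\to 0$ in $L^1$ we get $u=0$, hence $\nabla u=0$, while $\nabla u_n\to\nabla u$ in $L^1$ together with $\|\nabla u_n\|_{L^1}=1$ forces $\|\nabla u\|_{L^1}=1$, which is impossible. I expect the compactness step to be the only delicate point: it amounts to the compact embedding $BH(\Omega;\mathbb{R}^d)\subset\subset W^{1,1}(\Omega;\mathbb{R}^d)$, and it is here that the Lipschitz regularity of $\partial\Omega$ enters, through the $BV$ compactness theorem applied to the gradients $\nabla u_n$. An alternative, more constructive route would be to first prove the estimate with $\varepsilon=1$ on the unit cube $Q$, again by a compactness argument but now on a fixed domain, using that a $BH$ function with vanishing gradient is constant; then rescale to an arbitrary cube $Q(x,\delta)$ to obtain $\|\nabla u\|_{L^1(Q(x,\delta))}\leq (C_0/\delta)\,\|u\|_{L^1(Q(x,\delta))} + \delta\,|D^2 u|(Q(x,\delta))$; and finally sum over a finite cover of $\Omega$ by such cubes with bounded overlap, with $\delta$ small playing the role of $\varepsilon$ up to the overlap constant.
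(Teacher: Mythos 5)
Your argument is correct. Note that the paper does not prove this statement at all: it is quoted from \cite[Theorem 2.2]{FLP2}, so there is no in-paper proof to compare against. Your contradiction/compactness route is the standard Ehrling-type argument and it is complete: the normalisation $\|\nabla u_n\|_{L^1}=1$ is legitimate because the defining inequality is positively $1$-homogeneous (and forces $\nabla u_n\neq 0$); the bounds $\|u_n\|_{L^1}<1/n$ and $|D^2u_n|(\Omega)<1/\varepsilon_0$ make $(\nabla u_n)$ bounded in $BV(\Omega;\mathbb R^{d\times N})$, so the $BV$ compactness theorem on the bounded Lipschitz domain $\Omega$ gives $\nabla u_n\to w$ in $L^1$ along a subsequence; the distributional identification yields $w=\nabla 0=0$, contradicting $\|w\|_{L^1}=1$. (In fact you do not need compactness for $(u_n)$ itself, since $u_n\to0$ in $L^1$ is immediate from the normalisation.) The only caveat concerns your alternative ``constructive'' sketch: the final covering step is delicate near $\partial\Omega$, since cubes centred in $\Omega$ either stick out of the domain or fail to cover a boundary strip, so one would need a $BH$ extension operator or boundary charts there; but since this is offered only as an aside and your main proof stands on its own, this does not affect the verdict.
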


\color{black}

\subsection{Structured Deformations}
First-order structured deformations were introduced by Del Piero and Owen \cite{DPO1993} in order to provide a mathematical framework that captures the effects at the macroscopic level of smooth deformations and of non\--\-smooth deformations (disarrangements) at one sub-macroscopic level. 
In the classical theory of mechanics, the deformation of the body is characterised exclusively by the ma\-cros\-co\-pic deformation field~$g$ and its gradient~$\nabla g$. 
In the framework of structured deformations, an additional geometrical field~$G$ is introduced with the intent to capture the contribution at the macroscopic scale of smooth sub-macroscopic changes, while the difference $\nabla g-G$ captures the contribution at the macroscopic scale of non-smooth sub-macroscopic changes, such as slips and separations (referred to as \emph{disarrangements} in \cite{DO2002}). \color{black} The field~$G$ is called the deformation without disarrangements, and, heuristically, the disarrangement tensor $M\coloneqq \nabla g-G$ is an indication of how ``non-classical'' a structured deformation is.
This broad theory is rich enough to address mechanical phenomena such as elasticity, plasticity and the behaviour of crystals with defects.

The variational formulation for first-order structured deformations in the $SBV$ setting was first addressed by Choksi and Fonseca \cite{CF1997} where a (first-order) structured deformation is defined to be a pair 
$(g,G)\in SBV(\Omega;\R{d})\times L^p(\Omega;\R{d\times N}), \; p \geq 1.$
Departing from 
a functional which associates to any deformation $u$ of the body an energy featuring a bulk contribution, which measures the deformation (gradient) throughout the whole body, and an interfacial contribution, accounting for the energy needed to fracture the body, an integral representation for the ``most economical way'' to approach a given structured deformation was derived. 
The theory of first-order structured deformations was broadened by Owen and Paroni \cite{OP2000} to second\--\-order structured deformations, which also account for other geometrical changes, such as curvature, at one sub-macroscopic level. The variational formulation in the $SBV^2$ setting for second-order structured deformations was carried out by Barroso, Matias, Morandotti and Owen \cite{BMMO2017}. This formulation allows for jumps on both the approximating fields, as well as on their gradients. A formulation in $BH$, allowing  for jumps  only  in the gradients was carried out by Fonseca, Hagerty and Paroni \cite {FHP}.

\subsection{BV-ellipticity and BH-ellipticity}

With the aim of understanding the convexity-type property of our surface energy density obtained in Theorem \ref{prop4.12},
and in order to establish a 
comparison with the analogous property obtained in \cite{Silhavy} for the surface energy density in the case of first-order structured deformations,  we recall the notion of $BV$-ellipticity, given in \cite{AB}, and that of 
$SBH$-ellipticity defined in \cite{SZ2}.

\begin{definition}\label{BVelldef}
Let $T \subset \mathbb R^d$ be a finite set. A function
$\Phi: T \times T \times \mathbb S^{N-1} \to [0,+\infty[$ is said to be
$BV$-elliptic if the following inequality holds
$$\Phi(a,b,\nu) \leq \int_{Q_\nu \cap S_w}\Phi(w^+,w^-,\nu_\eta) \, d \mathcal H^{N-1},$$
for every $a, b \in T$, $\nu \in \mathbb S^{N-1}$ and for every piecewise constant function $w \in SBV(Q_\nu;T)$ such that $w=a$ in 
$\partial Q_\nu \cap \{x \cdot \nu > 0\}$
and
$w=b$ in $\partial Q_\nu \cap \{x \cdot \nu < 0\}$.
\end{definition}

Observe that when $T=\{a,b\}$, then Definition \ref{BVelldef} coincides with  $BV$-ellipticity with respect to $SJ_0$, as recently introduced in 
\cite[Definition 3.1]{EKM}. 
On the other hand, when dealing with fields which are a single pure jump, 
among all the notions considered in
\cite[Definition 3.1]{EKM},  this is the only meaningful one. 

\begin{remark}\label{Bvellch}
In view of \cite[Theorem 5.14]{AFP} and standard arguments in the Calculus of Variations which, in $BV$, allow us to prescribe the boundary conditions of the approximating sequences so as to coincide with those of the target function, 
$BV$-ellipticity can be characterised as follows.

A function
$\Phi: T \times T \times \mathbb S^{N-1} \to [0,+\infty[$ is 
$BV$-elliptic if
$$\Phi(a,b,\nu) \leq \liminf_n\int_{Q_\nu \cap S_{w_n}}
\Phi(w_n^+,w_n^-,\nu_{w_n}) \, d \mathcal H^{N-1},
$$
for every $a, b \in T$, $\nu \in \mathbb S^{N-1}$ and for every sequence $\{w_n\} \subset SBV(Q_\nu; T)$ such that $w_n \to w_{a,b,\nu}$ in 
$L^{1}(Q_\nu;\mathbb R^d)$, $w_n = w_{a,b,\nu}$ on $\partial Q_\nu$, 
where $w_{a,b,\nu}$
is such that  $w=a$ in $\partial Q_\nu \cap \{x \cdot \nu > 0\}$
and $w=b$ in $\partial Q_\nu \cap \{x \cdot \nu < 0\}$.
\end{remark}

\begin{definition}\label{BHelldef}
Let $\mathcal P := \left \{(\xi,\eta,\nu) \in 
\mathbb R^{d \times N} \times \mathbb R^{d \times N} \times \mathbb S^{N-1} : 
\xi - \eta = \zeta \otimes \nu, \mbox{ for some } \zeta \in \mathbb R^d\right \}$. 
A continuous function
$\varphi: \mathcal P \to [0,+\infty[$ is said to be
$SBH$-elliptic if the following inequality holds
$$\varphi(\xi,\eta,\nu) \leq \liminf_n\int_{Q_\nu \cap S_{\nabla u_n}}
\varphi((\nabla u_n)^+,(\nabla u_n)^-,\nu_{\nabla u_n}) \, d \mathcal H^{N-1},
$$
for every $(\xi,\eta,\nu) \in \mathcal P$ and for every sequence $\{u_n\} \subset SBH(Q_\nu;\mathbb R^d)$ such that 
$u_n \to u_{\xi,\eta,\nu}$ in 
$W^{1,1}(Q_\nu;\mathbb R^d)$, $u_n = u_{\xi,\eta,\nu}$\color{black} on $\partial Q_\nu$ and
$|\nabla^2 u_n| \to 0$ in $L^{1}(Q_\nu)$, where $u_{\xi,\eta,\nu}$\color{black} is given by 
$u_{\xi,\eta,\nu}(y) := \begin{cases}
\xi \cdot y, & {\rm if } \; y \cdot \nu \geq 0\\
\eta \cdot y, & {\rm if } \; y \cdot \nu < 0.
\end{cases}$
\end{definition}

\section{Main Result}\label{mr}

We begin this section by recalling some results proved in \cite{FHP} that are relevant to the problem under consideration in this paper.

Let $F$ be the functional defined for each $A \in \mathcal{O}(\Omega)$ by
\begin{equation}\label{FHPfunct}
F(u;A) := \begin{cases}
\displaystyle \int_A h_0(x,u(x),\nabla u(x),\nabla^2u(x)) \, dx &\\
\hspace{0,7cm} + \displaystyle \int_{S(\nabla u)\cap A} \psi_0(x,u(x),\nabla u^+(x), \nabla u^-(x), \nu_{\nabla u}(x)) \, d\mathcal H^{N-1}(x),  & \mbox{if } u \in SBH(\Omega; \mathbb R^d) \\
+\infty,     & \text{otherwise},
\end{cases}
\end{equation}
where the densities $h_0$ and $\psi_0$ satisfy the following hypotheses:
\begin{enumerate}
    \item[(H1)] $h_0 : \Omega \times \mathbb R^d \times \mathbb R^{d\times N}
    \times \mathbb R^{d\times N\times N} \to [0, +\infty[$ is measurable in the first variable and continuous in the remaining ones, and satisfies 
    $$ \frac{1}{C_1} |\Lambda| \leq h_0(x,a,\xi, \Lambda) \leq C_1(1 + |\Lambda|),$$
    for a.e. $x \in \Omega$ and every 
    $(a,\xi,\Lambda)\in \mathbb R^d \times \mathbb R^{d\times N} \times 
    \mathbb R^{d\times N\times N}$ and for some $C_1>0;$
    \item[(H2)] $\psi_0: \Omega \times \mathbb R^d \times (\mathbb R^{d\times N})^2 \times \mathbb{S}^{N-1} \to [0, +\infty[$ is continuous and  satisfies 
    $$ \frac{1}{C_2} |\xi - \eta| \leq \psi_0(x,a, \xi, \eta, \nu) 
    \leq C_2(1 + |\xi - \eta|),$$
    for all $(x,a,\xi, \eta, \nu) \in \Omega \times \mathbb R^d \times (\mathbb R^{d\times N})^2 \times \mathbb{S}^{N-1}$ and for some $C_2> 0$.
\end{enumerate}

Consider the functional 
$\mathcal G : SD_2(\Omega) \times \mathcal{O}(\Omega) \to [0, +\infty[$ defined by
\begin{equation}\label{FHPrelaxf}
\mathcal{G}(u,U;A) := \inf \left\{ \liminf_{n\to +\infty} F(u_n; A) : u_n \to u \; \text{in} \; L^1(\Omega; \mathbb R^d), \nabla^2 u_n \wsto U \; \text{in}\; \mathcal{M} (\Omega; \mathbb R^{d\times N\times N}_{sym})\right\}.
\end{equation}

It was shown in \cite[Lemmas 5.1 - 5.3]{FHP}, that $\mathcal G$ satisfies the following properties.

\begin{proposition}\label{Prop3.1} Let $h_0$ and $\psi_0$ be functions satisfying hypotheses (H1)-(H2) and let $\mathcal G$ be given by \eqref{FHPrelaxf}, where $F$ is defined in \eqref{FHPfunct}. Then
\begin{itemize}
\item[(i)] (lower semi-continuity) for every $(u,U) \in SD_2(\Omega)$, 
$A \in \mathcal O(\Omega)$ and every sequence $\{(u_n,U_n)\} \subset SD_2(\Omega)$ such that $u_n \to u$ in $L^1(\Omega;\mathbb R^d)$ and $U_n \wsto U$ in 
$\mathcal{M} (\Omega; \mathbb R_{sym}^{d\times N\times N})$, it follows that
$$\mathcal G(u,U;A) \leq \liminf_{n\to +\infty}\mathcal G(u_n,U_n;A);$$
\item[(ii)] (locality)  $\mathcal G$ is local, that is, for every
$A\in \mathcal O(\Omega)$, if $(u,U), (v,V) \in SD_2(\Omega)$ are such that $u=v$, $U=V$, for $\mathcal L^N$ a.e $x \in A$, then 
$\mathcal G(u,U;A) = \mathcal G(v,V;A)$;
\item[(iii)] (growth) there exists $C >0$ such that, for every $(u,U) \in SD_2(\Omega)$, $A \in \mathcal O(\Omega)$, we have
$$\frac{1}{C} \Big(\|U\|_{L^1(A;\mathbb R_{sym}^{d\times N\times N})} + |D^2u|(A)\Big) \leq \mathcal G(u,U;A) \leq C \Big(\mathcal L^N(A) + 
\|U\|_{L^1(A;\mathbb R_{sym}^{d\times N\times N})} + |D^2u|(A)\Big);$$
\item[(iv)] (Radon measure) for every $(u,U) \in SD_2(\Omega)$, the set function
$\mathcal G(u,U;\cdot)$ is the restriction to $\mathcal O(\Omega)$ of a Radon measure.
\end{itemize}
\end{proposition}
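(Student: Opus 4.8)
The plan is to derive the four properties from the structure of $F$ and the general machinery of the global method for relaxation (\cite{BFM,BFLM}), essentially as in \cite[Lemmas~5.1--5.3]{FHP}; I would treat the growth bound (iii) first since it feeds the remaining items. For the lower bound, I would fix an admissible sequence $\{u_n\}$ for $\mathcal G(u,U;A)$, use that $u_n\in SBH(\Omega;\mathbb R^d)$ gives $|D^2u_n|(A)=\|\nabla^2u_n\|_{L^1(A;\mathbb R^{d\times N\times N}_{sym})}+\int_{S(\nabla u_n)\cap A}|[\nabla u_n]|\,d\mathcal H^{N-1}$, and apply the coercivity in (H1)--(H2) to obtain $F(u_n;A)\ge\frac1C\big(|D^2u_n|(A)+\|\nabla^2u_n\|_{L^1(A)}\big)$; then, along a subsequence realising the $\liminf$, I would pass to the limit via the weak-$*$ lower semicontinuity of the total variation on the open set $A$, noting that $u_n\to u$ in $L^1(\Omega)$ already forces $D^2u_n\to D^2u$ in $\mathcal D'(A)$ and that $\nabla^2u_n\,\mathcal L^N\wsto U\,\mathcal L^N$. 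For the upper bound I would test $\mathcal G(u,U;A)$ with the good approximating sequence for $(u,U)$ furnished by the approximation theorem for second-order structured deformations in $BH$ (see \cite{FHP}) --- $u_n\to u$ in $W^{1,1}$, $\nabla^2u_n\to U$ in $L^1$, with $\mathcal H^{N-1}(S(\nabla u_n)\cap A)$ and $\int_{S(\nabla u_n)\cap A}|[\nabla u_n]|\,d\mathcal H^{N-1}$ controlled by $|D^2u|(A)+\|U\|_{L^1(A)}$ --- and then apply the growth bounds in (H1)--(H2) termwise.

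For (i) I would run a diagonalisation: given $(u_n,U_n)\to(u,U)$ in the stated topologies, choose for each $n$ an admissible sequence $\{v^k_n\}_k$ for $\mathcal G(u_n,U_n;A)$ with $\liminf_k F(v^k_n;A)\le\mathcal G(u_n,U_n;A)+\tfrac1n$, observe that (iii) bounds the relevant Hessians in $\mathcal M(\Omega;\mathbb R^{d\times N\times N}_{sym})$ (where the weak-$*$ topology is metrisable), and extract a diagonal sequence admissible for $\mathcal G(u,U;A)$. For (ii), assuming $u=v$ and $U=V$ a.e.\ on $A$, I would take a near-optimal admissible sequence for $\mathcal G(u,U;A)$ and splice it, on a slightly smaller open set $A'\ccc A$, with a good approximating sequence for $(v,V)$ on $\Omega\setminus\overline{A'}$ via a cut-off supported near $\partial A'$, killing the cross terms in $\nabla^2$ by averaging over the cut-off level; letting $A'\uparrow A$ and using (iii) to discard the contribution on $A\setminus A'$ then gives $\mathcal G(v,V;A)\le\mathcal G(u,U;A)$, and symmetry gives equality. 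For (iv) I would invoke the De Giorgi--Letta criterion: monotonicity is immediate, superadditivity follows from the additivity of $F$ over disjoint open sets, inner regularity from (iii) together with that additivity, and subadditivity from the same cut-off/averaging gluing used for (ii), now joining near-optimal sequences for $\mathcal G(u,U;B)$ and $\mathcal G(u,U;C)$ across the overlap whenever $A\ccc B\cup C$.

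The two steps I expect to be the genuine obstacles are: the cut-off construction underlying subadditivity in (iv) --- equivalently the splicing in (ii) --- where the cut-off must be performed so as to respect simultaneously the $L^1$ convergence of the $u_n$ and the weak-$*$ convergence of the $\nabla^2 u_n$, which is the standard but delicate heart of the global method in this second-order setting; and securing, within the $SBH$ framework, an approximation theorem for pairs $(u,U)\in SD_2(\Omega)$ carrying the quantitative control on the jump set of the approximants that the upper bound in (iii) needs.
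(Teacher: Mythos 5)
The paper offers no proof of this proposition: it is imported verbatim from \cite[Lemmas 5.1--5.3]{FHP}, so the only available comparison is with the arguments there, and your outline does reconstruct them along the standard lines (coercivity plus lower semicontinuity of total variation for the lower bound in (iii), the $SD_2$ approximation theorem for the upper bound, diagonalisation for (i), cut-off/splicing for (ii), De Giorgi--Letta for (iv)). The two difficulties you flag are real, and both are resolved in \cite{FHP} essentially as you anticipate, with the interpolation inequality (Theorem \ref{interpolBH}) doing the work of upgrading $L^1$ convergence plus Hessian bounds to $W^{1,1}$ convergence on the transition layer, which is what kills the cross terms in the gluing.

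One step is stated too quickly and, as written, would fail: in the diagonalisation for (i) you claim that ``(iii) bounds the relevant Hessians in $\mathcal M(\Omega;\mathbb R^{d\times N\times N}_{sym})$''. The growth estimate (equivalently, the coercivity in (H1)--(H2)) only controls $|D^2 v^k_n|(A)$ and $\|\nabla^2 v^k_n\|_{L^1(A)}$, i.e.\ the masses \emph{on $A$}; the admissibility condition in \eqref{FHPrelaxf}, however, requires $\nabla^2 v^{k(n)}_n \wsto U$ in $\mathcal M(\Omega)$, and to pass from convergence against a countable dense family of test functions to genuine weak-$*$ convergence on all of $\Omega$ you need a mass bound on $\Omega$, which nothing in your argument provides on $\Omega\setminus \overline A$. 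This is repairable --- for instance by first observing that the infimum in \eqref{FHPrelaxf} is unchanged if one restricts to admissible sequences that coincide, outside a neighbourhood of $\overline A$, with a fixed good approximation of $(u_n,U_n)$ furnished by the approximation theorem (a gluing of exactly the type you use for (ii) and (iv)), after which the global mass bound is automatic --- but as it stands the diagonal sequence you extract need not be admissible for $\mathcal G(u,U;A)$. The rest of the outline I have no objection to.
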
 

The properties in the previous proposition ensure that $\mathcal G$ satisfies the hypotheses of \cite[Theorem 4.6]{FHP}  and hence
admits an integral representation of the form
\begin{align*}
\mathcal{G}(u,U; A) &= 
\int_A h_{\mathcal G}(x,u(x),\nabla u(x),\nabla^2 u(x), U(x))\, dx \\
& + \int_{S(\nabla u)\cap A} \psi_{\mathcal G}(x,u(x),(\nabla u)^+(x),
(\nabla u)^-(x),\nu_{\nabla u}(x))\, d \mathcal H^{N-1}(x),
\end{align*}
for all $(u,U) \in SD_2(\Omega)$ and all $A \in \mathcal{O}(\Omega)$.

Furthermore, taking into account that the proof of \cite[Theorem 4.6]{FHP} is based on the global method for relaxation introduced in \cite{BFM}, the relaxed energy density functions 
$h_{\mathcal G}: \Omega \times \mathbb R^d \times \mathbb R^{d\times N}\times \mathbb R^{d\times N\times N}_{sym} \times \mathbb R^{d\times N\times N}_{sym} 
\to [0, +\infty[$ and 
$\psi_{\mathcal G}:\Omega \times \mathbb R^d \times \mathbb R^{d\times N}\times \mathbb R^{d\times N} \times \mathbb {S}^{N-1} \to [0, +\infty[ $ are defined in the following way.

Given $(u,U;A) \in SD_2(\Omega) \times \mathcal O(\Omega)$, we consider the set 
\begin{equation}\label{adm}
\mathcal U(u,U;A) := \left\{(v,V) \in SD_2(\Omega) : \mbox{spt}(u-v) \subset \subset A, \int_A (U-V ) dx= 0\right\}
\end{equation}
and we define the functional
$$m_{\mathcal G}(u,U;A) := \inf \big\{\mathcal G(v,V;A) : (v,V) \in \mathcal U(u,U;A)\big\}.
$$
Then, 
\begin{equation}\label{relbulk}
h_{\mathcal G}(x_0,a,\xi,\Lambda,M) = \lim_{\varepsilon\to 0^+}
\frac{m_{\mathcal G}(a+\xi(\cdot-x_0)+ \frac{1}{2}\Lambda(\cdot-x_0,\cdot-x_0),M;Q(x_0,\varepsilon))}{\varepsilon^N},
\end{equation}
\begin{equation}\label{relsurf}
\psi_{\mathcal G}(x_0,a,\xi,\eta,\nu) = \lim_{\varepsilon\to 0^+}
\frac{m_{\mathcal G}(a+u_{\xi,\eta,\nu}(\cdot-x_0),O;Q_\nu(x_0,\varepsilon))}
{\varepsilon^{N-1}},
\end{equation}
for all $x_0 \in \Omega$, $a \in \mathbb R^d$, $\xi,\eta \in \mathbb R^{d\times N}$,
$\Lambda, M \in \mathbb R^{d\times N\times N}_{sym}$ and $\nu \in \mathbb {S}^{N-1}$, where
$O$ denotes the zero $\mathbb R^{d\times N\times N}$ tensor and $u_{\xi,\eta,\nu}$
is defined by
\begin{equation}\label{jumpf}
u_{\xi,\eta,\nu}(y) := \begin{cases}
\xi \cdot y, & {\rm if } \; y \cdot \nu \geq 0\\
\eta \cdot y, & {\rm if } \; y \cdot \nu < 0,
\end{cases}
\end{equation}
and $\xi - \eta = \zeta \otimes \nu$, for some $\zeta \in \mathbb R^{d\times N}$.
\medskip

We now consider the functional defined in \eqref{Funct}, where the densities $f_0$ and $g_0$ satisfy hypotheses (G1)-(G2), and we let 
$\mathcal F : SD_2(\Omega) \times \mathcal{O}(\Omega) \to [0, +\infty[$ be given by 
\eqref{relaxf}.
\color{black}
We point out that, in the particular case where $h_0$ can be decoupled as the sum
$$h_0(x,a,\xi,\Lambda) = f_0(\Lambda) + f_1(x,a,\xi), \; \forall x \in \Omega,
a \in \mathbb R^d, \xi \in \mathbb R^{d\times N}, 
\Lambda \in \mathbb R^{d\times N\times N}_{sym},$$
and $\psi_0$ does not depend explicitly on $(x,u)$, and does not depend on $\xi$ and $\eta$ separately, but rather on their difference, then the energies $F$ and $F_0$ are equal and the functionals $\mathcal F$ and $\mathcal G$ coincide, provided that the growth conditions from above in (H1)-(H2) are replaced by those of (G1)-(G2).
These more restrictive growth conditions from above will be important to prove the $SBH$-ellipticity property under consideration, (see Proposition \ref{propgrowth}). However, notice that our model is different from the case considered in \cite{Silhavy}, where the result refers only to a purely interfacial energy.

Furthermore, as mentioned in the Introduction, the continuity, with respect to the $W^{1,1}(\Omega;\mathbb R^d)$ convergence of $u_n$ to $u$, of the term involving $f_1$ in $F_0$, allows us to neglect it in the relaxation process, so in what follows we will consider the functional 
$\mathcal F_1: SD_2(\Omega) \times \mathcal{O}(\Omega) \to [0, +\infty[$  given by 
\begin{align}\label{Lrelaxf}
\mathcal F_1 (u,U;A) &:= \inf \Big\{ \liminf_{n\to +\infty} 
\left(\int_A f_0(\nabla^2 u_n(x)) \, dx +
\int_{S(\nabla u_n) \cap A} g_0([\nabla u_n](x), \nu_{\nabla u_n}(x)) 
\, d\mathcal H^{N-1}(x)\right) :\nonumber \\
& \hspace{2,5cm}u_n \to u \; \text{in} \; L^{1}(\Omega; \mathbb R^d), 
\nabla^2 u_n \wsto U \; \text{in}\; 
\mathcal{M} (\Omega; \mathbb R_{sym}^{d\times N\times N})\Big\}.
\end{align}

Hence, adapting \cite[Lemmas 5.1 - 5.3]{FHP}, we can show that $\mathcal F_1$ satisfies the following properties.

\begin{proposition}\label{calFprop} 
Let $f_0$ and $g_0$ be functions satisfying hypotheses (G1)-(G2) and let $\mathcal F_1$ be given by \eqref{Lrelaxf}.
Then
\begin{itemize}
\item[(i)] (lower semi-continuity) for every $(u,U) \in SD_2(\Omega)$, 
$A \in \mathcal O(\Omega)$ and every sequence $\{(u_n,U_n)\} \subset SD_2(\Omega)$ such that $u_n \to u$ in $L^1(\Omega;\mathbb R^d)$ and $U_n \wsto U$ in 
$\mathcal{M} (\Omega; \mathbb R_{s}^{d\times N\times N})$, it follows that
$$\mathcal F_1(u,U;A) \leq \liminf_{n\to +\infty}\mathcal F_1(u_n,U_n;A);$$
\item[(ii)] (locality)  $\mathcal F_1$ is local, that is, for every
$A\in \mathcal O(\Omega)$, if $(u,U), (v,V) \in SD_2(\Omega)$ are such that $u=v$, $U=V$, for $\mathcal L^N$ a.e $x \in A$, then 
$\mathcal F_1(u,U;A) = \mathcal F_1(v,V;A)$;
\item[(iii)] (growth) there exists $C >0$ such that, for every 
$(u,U) \in SD_2(\Omega)$, $A \in \mathcal O(\Omega)$, we have
$$\frac{1}{C} \Big(\|U\|_{L^1(A;\mathbb R_{sym}^{d\times N\times N})} + |D^2u|(A)\Big) \leq \mathcal F_1(u,U;A) \leq 
C \Big(\|U\|_{L^1(A;\mathbb R_{sym}^{d\times N\times N})} +
|D^2u|(A)\Big);$$
\item[(iv)] (Radon measure) for every $(u,U) \in SD_2(\Omega)$, the set function
$\mathcal F_1(u,U;\cdot)$ is the restriction to $\mathcal O(\Omega)$ of a Radon measure.
\end{itemize}
\end{proposition}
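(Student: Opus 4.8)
The plan is to obtain (i)--(iv) by adapting to $\mathcal F_1$ the arguments that prove the corresponding properties of $\mathcal G$ in \cite[Lemmas 5.1--5.3]{FHP} (see also Proposition \ref{Prop3.1}); the only structural difference is that the densities now satisfy the homogeneous growth-from-above bounds in (G1)--(G2) rather than the affine bounds (H1)--(H2), and this is precisely what removes the $\mathcal L^N(A)$ term from the upper estimate in (iii). I would prove (iii) first, then deduce (i) by a diagonalization argument, and finally (ii) and (iv) by the usual localization machinery.

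For the lower bound in (iii): along any admissible sequence $\{u_n\}$ with finite $\liminf$, the coercivity $f_0(\Lambda)\ge\frac1{C_1}|\Lambda|$ and $g_0(\xi,\nu)\ge\frac1{C_2}|\xi|$ give that the $A$-energy of $u_n$ dominates $|D^2u_n|(A)=\|\nabla^2u_n\|_{L^1(A)}+|D^j(\nabla u_n)|(A)$ up to a constant; since these energies are bounded, $|D^2u_n|(\Omega)$ is bounded, so by Theorem \ref{interpolBH} the sequence $\{\nabla u_n\}$ is bounded in $BV(\Omega;\mathbb R^{d\times N})$, whence, up to a subsequence, $\nabla u_n\to\nabla u$ in $L^1$ and $D^2u_n\wsto D^2u$ in $\mathcal M(\Omega)$; lower semicontinuity of the total variation on the open set $A$, together with $\|U\|_{L^1(A)}\le\liminf_n\|\nabla^2u_n\|_{L^1(A)}$ and $\liminf(a_n+b_n)\ge\liminf a_n+\liminf b_n$, then yields $\mathcal F_1(u,U;A)\ge\frac1C(\|U\|_{L^1(A)}+|D^2u|(A))$. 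For the upper bound I would use the approximation result for second-order structured deformations from \cite{FHP} to produce $u_n\to u$ in $L^1$ with $\nabla^2u_n\wsto U$ and $\limsup_n|D^2u_n|(A)\le C(\|U\|_{L^1(A)}+|D^2u|(A))$; since $f_0(\nabla^2u_n)\le C_1|\nabla^2u_n|$ and $g_0([\nabla u_n],\nu)\le C_2|[\nabla u_n]|$ carry no additive constant, the $A$-energy of $u_n$ is $\le C|D^2u_n|(A)$, which gives the stated bound \emph{without} an $\mathcal L^N(A)$ term.

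Property (i) then follows by the standard diagonal argument as in \cite[Lemma 5.1]{FHP}: on sequences equibounded in $L^1\times L^1$ with equibounded Hessian variation, both $L^1$-convergence and weak-$*$ convergence of the Hessians are metrizable, so nearly optimal recovery sequences for the $\mathcal F_1(u_n,U_n;A)$ can be merged into one sequence admissible for $\mathcal F_1(u,U;A)$, Theorem \ref{interpolBH} again furnishing the gradient bounds needed to pass to the limit. Property (ii) holds because, for $u\in SBH$, the restrictions of $\nabla^2u$, $S(\nabla u)$, $[\nabla u]$ and $\nu_{\nabla u}$ to $A$ are determined by $u|_A$, so the admissible sequences and their $A$-energies only depend on the restriction; the equality of the two infima is obtained by the cutoff argument of \cite[Lemma 5.2]{FHP}. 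For (iv) I would check the hypotheses of the De Giorgi--Letta criterion: superadditivity on disjoint open sets is immediate, since the $A$-energy is an integral and one admissible sequence serves both pieces simultaneously; inner regularity follows from subadditivity and the growth bound (iii); and subadditivity, for $A\subset A_1\cup A_2$, is proved by gluing recovery sequences $u_n^1,u_n^2$ for $A_1,A_2$ through a cutoff $\varphi\in C_c^\infty(A_1)$, setting $w_n=\varphi u_n^1+(1-\varphi)u_n^2$.

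I expect this last subadditivity estimate to be the main obstacle. After extracting subsequences with equibounded Hessian variation, so that $\nabla u_n^i\to\nabla u$ in $L^1$ by Theorem \ref{interpolBH}, the bulk gluing errors $\nabla^2\varphi\,(u_n^1-u_n^2)$ and $\nabla\varphi\odot(\nabla u_n^1-\nabla u_n^2)$ tend to $0$ in $L^1(\Omega)$ and so do not affect the bulk part of the energy in the limit; the surface energy of $w_n$ on the transition region $\{0<\varphi<1\}$ must be made negligible, which I would do by splitting that region into $M$ disjoint sub-collars, selecting the one on which $|D^2u_n^1|+|D^2u_n^2|$ is smallest (hence no larger than $\tfrac1M$ times a bounded quantity), using the linear growth $g_0([\nabla w_n],\nu)\le C_2(|[\nabla u_n^1]|+|[\nabla u_n^2]|)$ there, and finally letting $M\to+\infty$. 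Once subadditivity is in hand, the De Giorgi--Letta criterion gives that $\mathcal F_1(u,U;\cdot)$ is the restriction to $\mathcal O(\Omega)$ of a Radon measure, completing the proof.
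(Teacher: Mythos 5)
Your proposal is correct and follows essentially the same route as the paper, which itself establishes Proposition \ref{calFprop} simply by adapting \cite[Lemmas 5.1--5.3]{FHP}, the only genuine modification being the one you identify: the homogeneous upper bounds in (G1)--(G2) (no additive constants) eliminate the $\mathcal L^N(A)$ term from the upper estimate in (iii). Your sketches of the lower bound, the diagonalization for (i), the cutoff argument for (ii), and the De Giorgi--Letta criterion with the $M$-layer slicing for (iv) are exactly the adaptations intended.
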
 

Once again, Proposition \ref{calFprop} ensures that $\mathcal F_1$ satisfies the hypotheses of \cite[Theorem 4.6]{FHP}. Therefore, we may conclude that 
$\mathcal F_1$
admits an integral representation of the form
\begin{align}\label{Fintrep}
\mathcal F_1(u,U; A) &= 
\int_A h_{\mathcal F_1}(x,u(x),\nabla u(x),\nabla^2 u(x), U(x))\, dx \nonumber \\
& + \int_{S(\nabla u)\cap A} \psi_{\mathcal F_1}(x,u(x),(\nabla u)^+(x),
(\nabla u)^-(x),\nu_{\nabla u}(x))\, d \mathcal H^{N-1}(x),
\end{align}
for all $(u,U) \in SD_2(\Omega)$ and all $A \in \mathcal{O}(\Omega)$. In the previous expression,
the relaxed energy density functions 
$h_{\mathcal F_1}: \Omega \times \mathbb R^d \times \mathbb R^{d\times N}\times \mathbb R^{d\times N\times N}_{sym}\times \mathbb R^{d\times N\times N}_{sym} 
\to [0, +\infty[$ and 
$\psi_{\mathcal F_1}: \Omega  \times \mathbb R^d \times \mathbb R^{d\times N}\times \mathbb R^{d\times N} \times \mathbb {S}^{N-1} \to [0, +\infty[ $ are given by \eqref{relbulk} and \eqref{relsurf}, respectively, with $m_{\mathcal G}$ replaced by
$$m_{\mathcal F_1}(u,U;A) := \inf \big\{\mathcal F_1(v,V;A) : 
(v,V) \in \mathcal U(u,U;A)\big\}
$$
and $\mathcal U(u,U;A)$ defined in \eqref{adm}, for 
$(u,U;A) \in SD_2(\Omega) \times \mathcal O(\Omega)$.

We will now show that, in the case under consideration, the integral representation of $\mathcal F_1$ may be simplified to 
\begin{equation}\label{fg}
\mathcal F_1(u,U; A) = \int_A f(\nabla^2 u(x), U(x))\, dx + 
\int_{S(\nabla u)\cap A} g((\nabla u)^+(x),(\nabla u)^-(x),\nu_{\nabla u}(x))
\, d \mathcal H^{N-1}(x),
\end{equation}
for certain functions $f$ and $g$.

\begin{proposition}\label{simprelax}
Let $f_0$ and $g_0$ be functions satisfying hypotheses (G1)-(G2) and let 
$\mathcal F_1$ be given by \eqref{Lrelaxf}. Then, for every $x_0 \in \Omega$, 
$a \in \mathbb R^d$, $\xi,\eta \in \mathbb R^{d\times N}$,
$\Lambda, M \in \mathbb R^{d\times N\times N}_{sym}$ and 
$\nu \in \mathbb {S}^{N-1}$, we have
\begin{equation}\label{h=h}
h_{\mathcal F_1}(x_0,a,\xi,\Lambda,M) = h_{\mathcal F_1}(0,0,0,\Lambda,M)
\end{equation}
and
\begin{equation}\label{psi=psi}
\psi_{\mathcal F_1}(x_0,a,\xi,\eta,\nu) 
= \psi_{\mathcal F_1}(0,0,\xi-\eta,0,\nu),
\end{equation}
where $h_{\mathcal F_1}$ and $\psi_{\mathcal F_1}$ are the functions appearing in \eqref{Fintrep}.
\end{proposition}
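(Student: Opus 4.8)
The plan is to establish the two identities \eqref{h=h} and \eqref{psi=psi} by exploiting the explicit formulas \eqref{relbulk} and \eqref{relsurf} for the relaxed densities together with the special structure of the energy $\mathcal F_1$ (which, unlike the general functional $\mathcal G$, has integrands that do not depend on $x$ or $u$, and whose surface integrand depends only on $[\nabla u]=(\nabla u)^+-(\nabla u)^-$). Since both sides of each identity are computed via the localised minimisation functional $m_{\mathcal F_1}(\,\cdot\,;Q)$, it suffices to produce, for a given competitor in the minimisation problem defining the left-hand side, a competitor for the right-hand side with the same (or lower, up to $o(1)$) energy, and vice versa. I would first record the translation invariance: if $(u_n)$ is an admissible approximating sequence for a target $(u,U)$ on a cube $Q(x_0,\varepsilon)$ or $Q_\nu(x_0,\varepsilon)$, then $v_n(y):=u_n(y+x_0)$ approximates the corresponding translated target on $Q(0,\varepsilon)$, with identical bulk and surface contributions because $f_0$ and $g_0$ have no explicit $x$-dependence; hence $m_{\mathcal F_1}$ is invariant under translations of $\Omega$ and we may take $x_0=0$ throughout.

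For \eqref{h=h}, the target in $m_{\mathcal F_1}$ defining $h_{\mathcal F_1}(0,a,\xi,\Lambda,M)$ is the affine-plus-quadratic polynomial $p(y)=a+\xi y+\tfrac12\Lambda(y,y)$ paired with the constant tensor $M$. I would subtract from any competitor $(v,V)$ (with $\spt(p-v)\ccc Q(0,\varepsilon)$ and $\int V = \int M$) the affine function $a+\xi y$: set $\tilde v:=v-(a+\xi y)$. Then $\nabla^2\tilde v=\nabla^2 v$, $S_{\nabla\tilde v}=S_{\nabla v}$ and $[\nabla\tilde v]=[\nabla v]$, so $\mathcal F_1(\tilde v,V;Q(0,\varepsilon))=\mathcal F_1(v,V;Q(0,\varepsilon))$ exactly (here is precisely where the decoupled structure $h_0=f_0(\Lambda)+f_1$ and the absence of lower-order dependence of $g_0$ matter); moreover $\tilde v$ is a competitor for the target $\tfrac12\Lambda(y,y)$, i.e.\ for $m_{\mathcal F_1}(0,0,0,\Lambda,M;Q(0,\varepsilon))$, since $\spt(\tfrac12\Lambda(\cdot,\cdot)-\tilde v)\ccc Q(0,\varepsilon)$. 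This map $(v,V)\mapsto(\tilde v,V)$ is a bijection between the two admissible classes preserving energy, so the two $m_{\mathcal F_1}$ values agree for every $\varepsilon$, and \eqref{h=h} follows after dividing by $\varepsilon^N$ and passing to the limit.

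For \eqref{psi=psi} the idea is identical but using a suitable affine correction adapted to the jump function. The target defining $\psi_{\mathcal F_1}(0,a,\xi,\eta,\nu)$ is $a+u_{\xi,\eta,\nu}$, while that defining $\psi_{\mathcal F_1}(0,0,\xi-\eta,0,\nu)$ is $u_{\xi-\eta,0,\nu}$; note $u_{\xi,\eta,\nu}(y)-u_{\xi-\eta,0,\nu}(y)=\eta y$ for all $y$ (checking both half-spaces), so the two targets differ by the affine function $a+\eta y$. Subtracting $a+\eta y$ from any competitor again leaves $\nabla^2$, $S_{\nabla\cdot}$ and $[\nabla\cdot]$ unchanged, hence preserves $\mathcal F_1$ on $Q_\nu(0,\varepsilon)$, and maps the admissible class for the left-hand side bijectively onto that for the right-hand side; dividing by $\varepsilon^{N-1}$ and letting $\varepsilon\to0^+$ gives \eqref{psi=psi}. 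I expect the only genuinely delicate point to be the bookkeeping that subtracting an affine function indeed preserves membership in $\mathcal U$ (the support-containment and zero-mean constraints in \eqref{adm}) and, more importantly, that it changes neither the bulk density $f_0(\nabla^2 v)$ nor the surface density $g_0([\nabla v],\nu_{\nabla v})$ — which is exactly why the stronger hypotheses (G1)--(G2), with $h_0$ decoupled and $\psi_0$ a function of $\xi-\eta$ alone, are invoked here rather than the general (H1)--(H2); everything else is a routine translation-and-truncation argument already present in the relaxation machinery of \cite{FHP}.
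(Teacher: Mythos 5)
Your proposal is correct and follows essentially the same route as the paper: both arguments rest on the invariance of $F_1$ (hence of $\mathcal F_1$ and $m_{\mathcal F_1}$) under addition of affine functions $a+\xi\cdot y$ (respectively $a+\eta\cdot y$ for the surface density, using $u_{\xi,\eta,\nu}-u_{\xi-\eta,0,\nu}=\eta\cdot y$), combined with a translation change of variables to remove the dependence on $x_0$. The only cosmetic difference is that the paper performs the affine subtraction at the level of the approximating sequences in the definition of $\mathcal F_1$, whereas you perform it on the competitors $(v,V)$ in $m_{\mathcal F_1}$; the two are equivalent.
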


\begin{remark}\label{rmk3.4}
Due to \eqref{h=h} and \eqref{psi=psi}, we conclude that  
\eqref{fg} holds for the functions $f$ and $g$ given by 
$f(\Lambda,M) : =  h_{\mathcal F_1}(0,0,0,\Lambda,M)$ and
$g(\xi,\eta,\nu) := \psi_{\mathcal F_1}(0,0,\xi-\eta,0,\nu)$. Thus we observe that, as in the case of the density $g_0$, $g$ does not depend on $\xi$ and $\eta$ independently, but only on the difference $\xi - \eta$.
\end{remark}

\begin{proof}[Proof of Proposition \ref{simprelax}]

We begin by proving equality \eqref{h=h}. 

Given $a \in \mathbb R^d$ and
$\xi \in \mathbb R^{d\times N}$ we define the affine function
$u_{a,\xi}(x) := a + \xi \cdot x$, for $x \in \Omega$.

Observe that our assumptions on $f_0$ and $g_0$ entail that 

\begin{align}\label{F1=F1}
	F_1(u;A)= F_1(u+u_{a,\xi};A),
\end{align} 

for every $a \in \mathbb R^d$, $\xi \in \mathbb R^{d\times N}$ and $u \in SBH(\Omega;\mathbb R^d)$, 
where 
\begin{align}\label{F1}
F_1(u;A):= \int_A f_0(\nabla^2 u(x)) \, dx +
\int_{S(\nabla u) \cap A} g_0([\nabla u](x),\nu_{\nabla u}(x)) \,
d \mathcal H^{N-1}(x),
\end{align}

for every $u \in SBH(\Omega;\mathbb R^d)$ and $A \in \mathcal O(\Omega)$.

This property of translation invariance with respect to affine functions  is clearly inherited by ${\mathcal F}_1$, since  $\{u_n\}$ is an admissible sequence for $\mathcal F_1(u,U;A)$ if, and only if, $\{ u_n+ u_{a,\xi}\}$ is admissible for 
$\mathcal F_1(u+u_{a,\xi},U;A)$ and we are taking into account \eqref{F1=F1}. 
From this we conclude that
$$m_{\mathcal F_1}(u,U;A) = m_{\mathcal F_1}(u+u_{a,\xi},U;A),$$
which clearly ensures that   
$$h_{\mathcal F_1}(x_0,a,\xi,\Lambda,M) = h_{\mathcal F_1}(x_0,0,0,\Lambda,M),$$
for every  $x_0\in\Omega$, $(a,\xi)\in \mathbb R^d \times \mathbb R^{d\times N}$ and $\Lambda, M \in \mathbb R^{d \times N \times N}_{sym}$.

To conclude that 
$$h_{\mathcal F_1}(x_0,0,0,\Lambda,M) = h_{\mathcal F_1}(0,0,0,\Lambda,M),$$
for every  $x_0\in\Omega$ and 
$\lambda, M \in \mathbb R^{d \times N \times N}_{sym}$, 
we use a change of variables argument. Indeed, it suffices to notice that 
$(u,U)$ is an admissible couple  for 
$m_{\mathcal F_1}
\left(\frac{1}{2}\Lambda(\cdot-x_0,\cdot-x_0),M;Q(x_0,\varepsilon)\right)$ 
if, and only if, the pair $(v,V)$, with $v(y):=u(x_0+y)$ and $V(y):=U(x_0+y)$, 
is admissible for 
$ m_{\mathcal F_1}
\left(\frac{1}{2}\Lambda(\cdot,\cdot), M;Q(0,\varepsilon)\right)$, 
taking also into account that 
$F_1(u_n;Q(x_0,\varepsilon))= F_1(v_n; Q(0,\varepsilon))$
whenever the admissible sequences $\{u_n\}$ and $\{v_n\}$ are related in the same way, that is, $v_n(y)=u_n(x_0+y)$. 

We now proceed with the proof of equality \eqref{psi=psi}.
 This amounts to showing that
$$m_{\mathcal F_1}
\big(a + u_{\xi,\eta,\nu}(\cdot -x_0),O;Q_\nu(x_0,\varepsilon)\big) =
m_{\mathcal F_1}
\big(u_{\xi-\eta,0,\nu},O;Q_\nu(0,\varepsilon)\big),$$ 
where $u_{\xi,\eta,\nu}$ is given in \eqref{jumpf};
as above, 
this relies on a change of variables argument using translations. 
In fact, if a pair $(v, V)$ is admissible for
$m_{\mathcal F_1}\big(a + u_{\xi,\eta,\nu}(\cdot -x_0),O;Q_\nu(x_0,\varepsilon)\big)$, then the pair
$(w,W)$, where, for $y \in Q_\nu(0,\varepsilon)$,  $w$  and $W$ are defined by $w(y) := v(x_0 +y) - a - \eta \cdot y$, $W(y) := V(x_0+y)$, 
is admissible for
$m_{\mathcal F_1}\big(u_{\xi-\eta,0,\nu},O;Q_\nu(0,\varepsilon)\big)$, with the  energies satisfying 
\begin{equation}\label{equal calF}
\mathcal F_1(w,W;Q_\nu(0,\varepsilon)= \mathcal F_1(v,V;Q_\nu(x_0,\varepsilon)). \end{equation}
Likewise, departing from an admissible pair for  
$m_{\mathcal F_1}\big(u_{\xi-\eta,0,\nu},O;Q_\nu(0,\varepsilon)\big)$, a similar
construction yields an admissible pair for 
$m_{\mathcal F_1}\big(a + u_{\xi,\eta,\nu}(\cdot -x_0),O;Q_\nu(x_0,\varepsilon)\big)$ satisfying \eqref{equal calF}.
\end{proof}

\begin{lemma}\label{calFjump}
Let $f_0$ and $g_0$ be functions satisfying hypotheses (G1)-(G2) and let $\mathcal F$ be given by \eqref{Lrelaxf}. Consider the function $f$ defined in Remark \ref{rmk3.4}. 
Then
\begin{align}\label{f=0}f(\nabla^2 u_{\xi,\eta,\nu},O) = f(O,O) = 0,
	\end{align}
where $u_{\xi,\eta,\nu}$ was defined in \eqref{jumpf}.
\end{lemma}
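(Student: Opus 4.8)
The plan is to dispose of this statement in two moves: a trivial reduction and a one-line competitor argument. First I would observe that $u_{\xi,\eta,\nu}$, defined in \eqref{jumpf}, is affine on each of the two half-cubes $\{y\cdot\nu>0\}$ and $\{y\cdot\nu<0\}$, so its distributional Hessian is carried by the hyperplane $\{y\cdot\nu=0\}$ and the Radon--Nikod\'ym density $\nabla^2 u_{\xi,\eta,\nu}$ of the absolutely continuous part of $D^2 u_{\xi,\eta,\nu}$ vanishes $\mathcal L^N$-a.e. Consequently $f(\nabla^2 u_{\xi,\eta,\nu},O)=f(O,O)$ is nothing but the evaluation of $f$ at the zero tensor in both slots, and the whole content of the Lemma reduces to proving $f(O,O)=0$.

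For the latter, recall from Remark \ref{rmk3.4} that $f(O,O)=h_{\mathcal F_1}(0,0,0,O,O)$, and that by \eqref{relbulk} this equals $\lim_{\varepsilon\to0^+}\varepsilon^{-N}\,m_{\mathcal F_1}(0,O;Q(0,\varepsilon))$, since the target function $a+\xi(\cdot-x_0)+\tfrac12\Lambda(\cdot-x_0,\cdot-x_0)$ appearing in \eqref{relbulk}, evaluated at $x_0=0$, $a=0$, $\xi=0$, $\Lambda=O$, is identically $0$. It therefore suffices to show $m_{\mathcal F_1}(0,O;Q(0,\varepsilon))=0$ for every $\varepsilon>0$. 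The pair $(v,V)=(0,0)$ lies in the admissible class $\mathcal U(0,O;Q(0,\varepsilon))$ of \eqref{adm}, because $\spt(0-0)=\emptyset\subset\subset Q(0,\varepsilon)$ and $\int_{Q(0,\varepsilon)}(O-0)\,dx=0$. Moreover $\mathcal F_1(0,0;Q(0,\varepsilon))=0$: the upper bound in Proposition \ref{calFprop}(iii) gives $\mathcal F_1(0,0;Q(0,\varepsilon))\leq C\big(\|0\|_{L^1}+|D^2 0|(Q(0,\varepsilon))\big)=0$ (equivalently, one may test \eqref{Lrelaxf} directly with the constant sequence $u_n\equiv0$, for which $F_1(u_n;\cdot)\equiv0$), while $\mathcal F_1$ is by construction non-negative. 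Hence $m_{\mathcal F_1}(0,O;Q(0,\varepsilon))=0$, and passing to the limit yields $f(O,O)=0$, which combined with the first paragraph gives \eqref{f=0}.

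As for difficulties, there are essentially none: the argument is pure bookkeeping once the two observations above are in place. The only points that deserve a line of care are (i) identifying the target function in \eqref{relbulk} associated with the choice $(\Lambda,M)=(O,O)$ (and $a=\xi=0$) as the zero function, so that $m_{\mathcal F_1}$ is evaluated at $(0,O;Q(0,\varepsilon))$, and (ii) checking that the trivial pair $(0,0)$ genuinely satisfies both constraints defining $\mathcal U$ in \eqref{adm}. I expect no obstacle beyond these.
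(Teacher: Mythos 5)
Your argument is correct, but it reaches $f(O,O)=0$ by a different mechanism than the paper. You go through the cell formula \eqref{relbulk}: since $f(O,O)=h_{\mathcal F_1}(0,0,0,O,O)$ and the target function associated with $(a,\xi,\Lambda)=(0,0,O)$ is identically zero, you test the infimum $m_{\mathcal F_1}(0,O;Q(0,\varepsilon))$ with the admissible competitor $(0,0)$, whose energy vanishes by the upper bound in Proposition \ref{calFprop}(iii), and then pass to the limit in $\varepsilon$. The paper never touches the cell formula: it applies the same upper bound of Proposition \ref{calFprop}(iii) directly to the pair $(u_{\xi,\eta,\nu},O)$, obtaining $\mathcal F_1(u_{\xi,\eta,\nu},O;A)\leq C\,|D^2u_{\xi,\eta,\nu}|(A)$, and observes that $D^2u_{\xi,\eta,\nu}$ is concentrated on the $\mathcal L^N$-null hyperplane $\{x\cdot\nu=0\}$; feeding this into the integral representation \eqref{fg} forces the bulk term $f(O,O)\,\mathcal L^N(A)$ to vanish. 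Both routes share the key reduction $\nabla^2 u_{\xi,\eta,\nu}=O$ $\mathcal L^N$-a.e.\ and both ultimately rest on Proposition \ref{calFprop}(iii); yours has the minor advantage of not needing the representation \eqref{fg} at all (only the definition of $f$ via $h_{\mathcal F_1}$ and \eqref{relbulk}), while the paper's is marginally shorter because it avoids the limit as $\varepsilon\to 0^+$. The two points of care you flag --- identifying the target function as $0$ and checking admissibility of $(0,0)$ in \eqref{adm} --- are exactly the right ones, and both check out.
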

\begin{proof}
It follows immediately from Proposition \ref{calFprop} (iii) that, for any 
$A \in \mathcal O(\Omega)$,
$$\mathcal F(u_{\xi,\eta,\nu},O;A) \leq C |D^2u_{\xi,\eta,\nu}|(A),$$
where 
$D^2u_{\xi,\eta,\nu}=D^s(\nabla u_{\xi,\eta,\nu})$ since 
$\nabla^2 u_{\xi,\eta, \nu}=O$. 
Consequently, taking into account that $D^2u_{\xi,\eta,\nu}$ is 
concentrated on $\{x \cdot \nu=0\}$, whose $\mathcal L^N$ measure is zero, \eqref{f=0} follows trivially by \eqref{fg}.
\end{proof}

\begin{proposition}
    \label{propgrowth}
    Let $f_0$ and $g_0$ be functions that satisfy hypotheses (G1)-(G2) and let $\mathcal F_1$ be given by \eqref{Lrelaxf}. Let $f$ and $g$ be the densities in \eqref{fg}. Then, there exists $C>0$ such that
    \begin{align}
    \label{fcoerc}
   \frac{1}{C}(|\Lambda|+ |M|)\leq f(\Lambda, M)\leq C(|\Lambda|+ |M|),\\
   \frac{1}{C}|\xi-\eta|\leq g(\xi,\eta,\nu)\leq C(|\xi-\eta|), \label{gcoerc}
    \end{align}
    for every $\Lambda, M \in \mathbb R^{d\times N\times N}_{sym}$, $\xi, \eta \in \mathbb R^d$ and $\nu \in \mathbb S^{N-1}.$
\end{proposition}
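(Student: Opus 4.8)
The argument involves no genuine difficulty; it is essentially bookkeeping around the representation formulas and the two-sided growth bound for $\mathcal F_1$. The plan is to read off $f$ and $g$ from the localisation formulas \eqref{relbulk} and \eqref{relsurf} applied to $\mathcal F_1$ (that is, with $m_{\mathcal G}$ replaced by $m_{\mathcal F_1}$) together with Remark \ref{rmk3.4}, so that $f(\Lambda,M)=\lim_{\eps\to 0^+}\eps^{-N}\,m_{\mathcal F_1}\big(\tfrac12\Lambda(\cdot,\cdot),M;Q(0,\eps)\big)$ and $g(\xi,\eta,\nu)=\lim_{\eps\to 0^+}\eps^{1-N}\,m_{\mathcal F_1}\big(u_{\xi-\eta,0,\nu},O;Q_\nu(0,\eps)\big)$, the existence of these limits being guaranteed by \cite[Theorem 4.6]{FHP}; one then estimates $m_{\mathcal F_1}$ from above and from below by means of Proposition \ref{calFprop}(iii). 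For the upper bounds one simply tests $m_{\mathcal F_1}$ with the target pair itself (the difference has empty support, and the integral constraint holds trivially). In the bulk case $v:=\tfrac12\Lambda(\cdot,\cdot)$ has constant Hessian $\Lambda$ and no jump, hence $|D^2 v|(Q(0,\eps))=|\Lambda|\,\eps^N$ and $\|M\|_{L^1(Q(0,\eps))}=|M|\,\eps^N$; the upper inequality in Proposition \ref{calFprop}(iii) then gives $m_{\mathcal F_1}\big(\tfrac12\Lambda(\cdot,\cdot),M;Q(0,\eps)\big)\le C(|\Lambda|+|M|)\,\eps^N$, and dividing by $\eps^N$ and letting $\eps\to 0^+$ yields $f(\Lambda,M)\le C(|\Lambda|+|M|)$. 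It is exactly here that the stronger upper growth conditions in (G1)-(G2) — which carry no additive constant, unlike (H1)-(H2) — are essential: otherwise a spurious term $C\,\mathcal L^N(Q(0,\eps))=C\eps^N$ would survive after division by $\eps^N$. Likewise $u_{\xi-\eta,0,\nu}$ satisfies $D^2 u_{\xi-\eta,0,\nu}=(\xi-\eta)\otimes\nu\,\mathcal H^{N-1}\lfloor\{y\cdot\nu=0\}$, so $|D^2 u_{\xi-\eta,0,\nu}|(Q_\nu(0,\eps))=|\xi-\eta|\,\eps^{N-1}$, and testing with $(u_{\xi-\eta,0,\nu},O)$ (so that $\|O\|_{L^1}=0$) gives $g(\xi,\eta,\nu)\le C|\xi-\eta|$ after dividing by $\eps^{N-1}$.

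For the lower bounds, let $(v,V)$ be an arbitrary competitor for the relevant $m_{\mathcal F_1}$. The lower inequality in Proposition \ref{calFprop}(iii) gives $\mathcal F_1(v,V;\cdot)\ge\tfrac1C\big(\|V\|_{L^1}+|D^2 v|\big)$, so it is enough to bound each of $\|V\|_{L^1}$ and $|D^2 v|$ from below by the corresponding quantity of the target pair. For the field this follows at once from the constraint $\int(U-V)\,dx=0$, which forces $\|V\|_{L^1(Q(0,\eps))}\ge\big|\int_{Q(0,\eps)}V\,dx\big|=\big|\int_{Q(0,\eps)}U\,dx\big|=|M|\,\eps^N$ in the bulk case, while in the surface case ($U=O$) only $\|V\|_{L^1}\ge 0$ is used. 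For the Hessian, the constraint $\spt(u-v)\ccc Q(0,\eps)$ shows that $w:=v-u$ is a $BH$ field compactly supported in the cube, so $D^2 w$ has zero total mass there and therefore $D^2 v(Q(0,\eps))=D^2 u(Q(0,\eps))$ — which equals $\Lambda\,\eps^N$ in the bulk case and $(\xi-\eta)\otimes\nu\,\eps^{N-1}$ in the surface case — whence $|D^2 v|(Q(0,\eps))\ge|\Lambda|\,\eps^N$ (respectively $|\xi-\eta|\,\eps^{N-1}$). Adding the two estimates, taking the infimum over $(v,V)$, dividing by $\eps^N$ (respectively $\eps^{N-1}$) and letting $\eps\to 0^+$ produces $f(\Lambda,M)\ge\tfrac1C(|\Lambda|+|M|)$ and $g(\xi,\eta,\nu)\ge\tfrac1C|\xi-\eta|$.

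The only point deserving a word of care — and the closest thing to an obstacle — is the assertion that $D^2 w(A)=0$ whenever $w\in BH(A;\mathbb R^d)$ has $\spt w\ccc A$; this is the standard integration-by-parts identity, obtained by picking $\varphi\in C_c^\infty(A)$ with $\varphi\equiv 1$ on a neighbourhood of $\spt w$ and writing $D^2 w(A)=\int_A\varphi\,dD^2 w=\int_A w\,D^2\varphi\,dx=0$, since $D^2\varphi$ vanishes on $\spt w$. The remaining verifications are routine: that $\tfrac12\Lambda(\cdot,\cdot)\in SBH(\Omega;\mathbb R^d)$ with Hessian identically $\Lambda$; that $u_{\xi-\eta,0,\nu}\in SBH(\Omega;\mathbb R^d)$ with the stated distributional Hessian (recalling that $\xi-\eta=\zeta\otimes\nu$ is rank-one and symmetric, as required by the definition of $BH$); that $|(\xi-\eta)\otimes\nu|=|\xi-\eta|$; and that the cross-section of $Q_\nu(0,\eps)$ orthogonal to $\nu$ has $\mathcal H^{N-1}$-measure $\eps^{N-1}$.
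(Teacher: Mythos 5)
Your proof is correct, but it takes a genuinely different route from the paper's. The paper's own argument is a two-line consequence of facts it has already established: it plugs the specific pairs $\big(\tfrac12\Lambda(\cdot,\cdot),M\big)$ and $(u_{\xi,\eta,\nu},O)$ simultaneously into the integral representation \eqref{fg} and into the two-sided bound of Proposition \ref{calFprop}(iii), so that the left- and right-hand sides become explicit multiples of $f(\Lambda,M)\,\mathcal L^N(A)$ and $g(\xi,\eta,\nu)\,\mathcal H^{N-1}(\{x\cdot\nu=0\}\cap A)$ (using Lemma \ref{calFjump} to kill the bulk term in the surface case), and then divides. You instead work at the level of the cell formulas \eqref{relbulk}--\eqref{relsurf}, estimating $m_{\mathcal F_1}$ from above by testing with the target pair and from below by showing that every competitor $(v,V)\in\mathcal U(u,U;A)$ satisfies $\int_A V\,dx=\int_A U\,dx$ and $D^2v(A)=D^2u(A)$ (the compact-support integration-by-parts identity you flag is indeed the only point needing care, and your justification of it is sound). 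This costs you the extra competitor analysis, but it buys something: your argument never invokes the representation \eqref{fg} for arbitrary $(u,U)$, only the definition of the densities via $m_{\mathcal F_1}$, so it would survive even if one only knew the cell formulas and Proposition \ref{calFprop}(iii). Your observation about where the absence of the additive constant in (G1)--(G2) is used (the bulk upper bound, where a spurious $C\eps^N/\eps^N$ would persist) is accurate and is implicit, not spelled out, in the paper. Minor points: the spurious term would in fact be harmless for the surface bound, since $C\eps^N/\eps^{N-1}\to0$; and the statement's ``$\xi,\eta\in\mathbb R^d$'' is a typo for $\mathbb R^{d\times N}$ that neither you nor the paper needs to worry about.
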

    \begin{proof}[Proof]
    In view of Proposition \ref{calFprop} (iii) and \eqref{fg}
    it follows that 
\begin{align*}
&\frac{1}{C} \Big(\|U\|_{L^1(A;\mathbb R_{sym}^{d\times N\times N})} + |D^2u|(A)\Big) \\
&\hspace{1cm}\leq \int_A f(\nabla^2 u(x), U(x))\, dx
+ \int_{S_{\nabla u}\cap A} g((\nabla u)^+(x), (\nabla u)^-(x), \nu_{\nabla u}(x)) \, d \mathcal H^{N-1}(x) \\
&\hspace{1cm}\leq 
C \Big(\|U\|_{L^1(A;\mathbb R_{sym}^{d\times N\times N})} +
|D^2u|(A)\Big),
\end{align*}
for every $A \in \mathcal O(\Omega)$  and $(u,U) \in SD_2(\Omega)$.
In particular, letting  $U= M \in \mathbb R^{d\times N \times N}_{sym}$,
$u(x) = \frac{1}{2}\Lambda (x,x)$,  with $\Lambda \in \mathbb R^{d\times N \times N}_{sym}$, we conclude the growth conditions stated in \eqref{fcoerc}. 

On the other hand,  by taking $U=0$, $u= u_{\xi, \eta,\nu}$ given in \eqref{jumpf}, referring also to Lemma \ref{calFjump},  we obtain \eqref{gcoerc}.
    \end{proof}

\begin{theorem}\label{prop4.12}
Let $F_1$ and $\mathcal F_1$ be the functionals defined in 
\eqref{F1} and \eqref{Lrelaxf}, respectively,  where $f_0$ and $g_0$ 
satisfy (G1)-(G2). 
If the  functions $f$ and $g$ in \eqref{fg} are continuous, then  $g$ is 
$SBH$-elliptic, 
that is, for every $\nu \in \mathbb S^{N-1}$, 
$\xi,\eta \in \mathbb R^{d \times N}$ such that  
$\xi - \eta = \zeta \otimes \nu$, for some $\zeta \in \mathbb R^d$,
\begin{align}\label{SBHtoprove}
g(\xi,\eta,\nu) &=
\int_{Q_\nu \cap \{x \cdot \nu =0\}} g(\xi,\eta, \nu) 
\, d \mathcal H^{N-1}(x) \nonumber \\ 
&\leq \liminf_{n\to +\infty} \int_{Q_\nu \cap \{x \cdot \nu =0\}}
g((\nabla u_n)^+(x),(\nabla u_n)^-(x), 
\nu_{\nabla u_n}(x)) \,d {\mathcal H}^{N-1}(x), 
\end{align}
whenever and $\{u_n\} \subset SBH(Q_\nu;\mathbb R^d)$ is such that
$u_n \to u_{\xi,\eta,\nu}$ in $W^{1,1}(Q_\nu;\mathbb R^d)$, $|\nabla^2 u_n|\to 0$ in $L^1(Q_\nu)$ and $u_n= u_{\xi,\eta,\nu}$  on $\partial Q_\nu$, where  $u_{\xi,\eta,\nu}$ is given in \eqref{jumpf}. 
\end{theorem}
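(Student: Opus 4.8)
The plan is to reduce \eqref{SBHtoprove} to the lower semicontinuity of $\mathcal F_1$, used together with the integral representation \eqref{fg} and the upper growth bound in \eqref{fcoerc}. All of these ingredients are of a local nature, and $f$, $g$ are defined intrinsically through the limits \eqref{relbulk}--\eqref{relsurf}, so they remain valid with $\Omega$ replaced by the Lipschitz domain $Q_\nu$; I would work on $Q_\nu$ throughout, regarding $\mathcal F_1$ as defined on $SD_2(Q_\nu)\times\mathcal O(Q_\nu)$.

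First I would identify the left-hand side of \eqref{SBHtoprove} with $\mathcal F_1(u_{\xi,\eta,\nu},O;Q_\nu)$. The rank-one constraint $\xi-\eta=\zeta\otimes\nu$ guarantees that $u_{\xi,\eta,\nu}$ in \eqref{jumpf} is continuous across $\{x\cdot\nu=0\}$, so $u_{\xi,\eta,\nu}\in SBH(Q_\nu;\mathbb R^d)$ with $\nabla^2 u_{\xi,\eta,\nu}=O$, $S_{\nabla u_{\xi,\eta,\nu}}=Q_\nu\cap\{x\cdot\nu=0\}$, $(\nabla u_{\xi,\eta,\nu})^+=\xi$, $(\nabla u_{\xi,\eta,\nu})^-=\eta$ and $\nu_{\nabla u_{\xi,\eta,\nu}}=\nu$. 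Plugging $(u_{\xi,\eta,\nu},O)$ into \eqref{fg} and using Lemma \ref{calFjump}, which forces $f(O,O)=0$ and hence annihilates the bulk term, I get $\mathcal F_1(u_{\xi,\eta,\nu},O;Q_\nu)=\int_{Q_\nu\cap\{x\cdot\nu=0\}}g(\xi,\eta,\nu)\,d\mathcal H^{N-1}=g(\xi,\eta,\nu)$, the last equality since the cross-section of the unit cube $Q_\nu$ has unit $\mathcal H^{N-1}$-measure; this is exactly the first line of \eqref{SBHtoprove}.

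Next, given an admissible sequence $\{u_n\}$ as in the statement, the key step is to use $U_n:=\nabla^2 u_n\in L^1(Q_\nu;\mathbb R^{d\times N\times N}_{sym})$ as the ``disarrangement'' companion, so that $(u_n,U_n)\in SD_2(Q_\nu)$; this is admissible because Proposition \ref{calFprop}(i) imposes no relation between $U_n$ and $\nabla^2 u_n$. Since $\|\nabla^2 u_n\|_{L^1(Q_\nu)}\to 0$ we have $U_n\wsto O$ in $\mathcal M(Q_\nu;\mathbb R^{d\times N\times N}_{sym})$, while $u_n\to u_{\xi,\eta,\nu}$ in $L^1$; lower semicontinuity then yields $g(\xi,\eta,\nu)=\mathcal F_1(u_{\xi,\eta,\nu},O;Q_\nu)\le\liminf_n\mathcal F_1(u_n,U_n;Q_\nu)$. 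Applying \eqref{fg} to $(u_n,\nabla^2 u_n)$ gives $\mathcal F_1(u_n,\nabla^2 u_n;Q_\nu)=\int_{Q_\nu}f(\nabla^2 u_n,\nabla^2 u_n)\,dx+\int_{S_{\nabla u_n}\cap Q_\nu}g((\nabla u_n)^+,(\nabla u_n)^-,\nu_{\nabla u_n})\,d\mathcal H^{N-1}$, and the upper bound in \eqref{fcoerc} gives $0\le f(\nabla^2 u_n,\nabla^2 u_n)\le 2C|\nabla^2 u_n|$, so the bulk term tends to $0$. A nonnegative infinitesimal can be removed from a $\liminf$, and this delivers the inequality in \eqref{SBHtoprove} (the surface integral being the one in Definition \ref{BHelldef}).

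I do not anticipate a genuine analytic obstacle: the argument is, at bottom, the observation that the relaxed density $g$ is automatically lower semicontinuous along $SBH$ sequences whose Hessians vanish in $L^1$, once one feeds the admissible but ``non-standard'' choice $U_n=\nabla^2 u_n$ into the lower semicontinuity of $\mathcal F_1$. The points needing care are bookkeeping: checking that \eqref{fg}, the lower semicontinuity of $\mathcal F_1$ and Lemma \ref{calFjump} are available on $Q_\nu$ rather than on $\Omega$ --- which follows from locality and the ambient-independence of $f$, $g$ --- and confirming the elementary structure of $u_{\xi,\eta,\nu}$ recalled above. Note that, in this direction of the inequality, the boundary datum $u_n=u_{\xi,\eta,\nu}$ on $\partial Q_\nu$ is not used.
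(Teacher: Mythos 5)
Your proposal is correct and follows essentially the same route as the paper: identify the left-hand side with $\mathcal F_1(u_{\xi,\eta,\nu},O;Q_\nu)$ via \eqref{fg} and Lemma \ref{calFjump}, apply the lower semi-continuity of Proposition \ref{calFprop}(i), and then discard the vanishing bulk contribution. The only (harmless) deviation is your choice $U_n=\nabla^2 u_n$ with the growth bound \eqref{fcoerc} to kill the bulk term, where the paper takes $U_n=O$ and invokes the continuity of $f$ together with the Vitali--Lebesgue theorem.
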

\begin{proof}[Proof]
Let $\{u_n\}$ be a sequence satisfying the conditions of the statement and such that the right-hand side of \eqref{SBHtoprove} is finite, otherwise the result is trivial.
Without loss of generality, up to passing to a subsequence, we can assume that the liminf is a limit.

In particular, the coercivity condition \eqref{gcoerc} entails that
\begin{align}\label{gb}
\lim_n |(D^2)^s u_n|(Q_\nu \cap \{x\cdot \nu =0\})< +\infty.
\end{align}
Then, by Lemma \ref{calFjump}, applying \eqref{fg} twice, to identify first 
$\mathcal F_1(u_{\xi,\eta,\nu}, O; Q_\nu)$ and then 
$\mathcal F_1(u_n,O; Q_\nu)$, and using the lower semi-continuity property of $\mathcal F_1$ stated in Proposition \ref{calFprop}(i), we have

\begin{align*}
	\int_{Q_\nu \cap \{x \cdot \nu =0\}}\
    g(\xi,\eta, \nu) \, d \mathcal H^{N-1}(x)
	& =\int_{Q_\nu}f(O, O) \, dx + 
	\int_{Q_\nu \cap \{x \cdot \nu =0\}}
    g (\xi,\eta, \nu) \, d \mathcal H^{N-1}(x)\\
	&=\mathcal F_1(u_{\xi,\eta,\nu}, O; Q_\nu)\\
	&\leq \liminf_{n\to +\infty}\mathcal F_1(u_n,O; Q_\nu)\\
    &= \liminf_{n\to +\infty}\left(\int_{Q_\nu} 
	f(\nabla^2 u_n(x),O) \, dx \right. \\
    & \left. \hspace{2cm} + \int_{Q_\nu \cap \{x \cdot \nu =0\}}
	g((\nabla u_n)^+(x),(\nabla u_n)^-(x), \nu_{\nabla u_n}(x)) \, d {\mathcal H}^{N-1}(x)\right)\\
	&= \liminf_{n\to +\infty} \int_{Q_\nu \cap \{x \cdot \nu =0\}}
	g((\nabla u_n)^+(x),(\nabla u_n)^-(x), \nu_{\nabla u_n}(x)) \, d {\mathcal H}^{N-1}(x),
\end{align*}
where we used the Vitali-Lebesgue convergence theorem in the last equality, taking into account  the continuity of $f$ and that, by Proposition \ref{calFprop} (iii),
the following bound holds
$$\int_{Q_\nu} f(\nabla^2 u_n(x),O) \, dx \leq \mathcal F_1(u_n,O; Q_\nu)
\leq C |D^2u_n|(Q_\nu),$$
where the right-hand side is uniformly bounded in view of \eqref{gb} and the fact that $|\nabla^2 u_n| \to 0$, in $L^1(Q_\nu)$, as $n \to +\infty$.
\end{proof}

\bigskip
\subsection*{Acknowledgements}

The research of Ana Cristina Barroso was partially supported by National Funding from FCT - Funda\c c\~ao para a Ci\^encia e a Tecnologia through project 
UID/04561/2025. 


The research of Jos\'{e} Matias was funded by FCT/Portugal through project UIDB/04459/2020 with DOI identifier 10-54499/UIDP/04459/2020.

Elvira Zappale acknowledges the support of Piano Nazionale di Ripresa e Resilienza (PNRR) - Missione 4 ``Istruzione e Ricerca''- Componente C2 Investimento 1.1, "Fondo per il Programma Nazionale di Ricerca e
Progetti di Rilevante Interesse Nazionale (PRIN)" - Decreto Direttoriale n. 104 del 2 febbraio 2022 - CUP 853D23009360006. She is a member of the Gruppo Nazionale per l'Analisi Matematica, la Probabilit\`a e le loro Applicazioni (GNAMPA) of the Istituto Nazionale di Alta Matematica ``F.~Severi'' (INdAM). 
She also acknowledges partial funding from the GNAMPA Project 2024 \emph{Composite materials and microstructures}, coordinated by M. Amar.
The work of EZ is also supported by Sapienza - University of Rome through the projects Progetti di ricerca medi, (2021), coordinator  S. Carillo e Progetti di ricerca piccoli,  (2022), coordinator E. Zappale.

\bibliographystyle{plain}

\end{document}